\newcommand{\rGamma}{\mathrm{\Gamma}}
\newcommand{\ZZ}{\mathbb{Z}}
\newcommand{\PP}{\mathbb{P}}
\newcommand{\cO}{\mathcal{O}}
\newcommand{\oO}{\mathrm{O}}
\renewcommand{\cL}{\mathscr{L}}
\newcommand{\cK}{\mathscr{K}}
\newcommand{\cM}{\mathscr{M}}
\newcommand{\isom}{\cong}
\newcommand{\End}{\mathrm{End}}
\newcommand{\Hom}{\mathrm{Hom}}
\newcommand{\Mat}{\mathrm{M}}
\newcommand{\ignore}[1]{}
\title{Arithmetic of split Kummer surfaces:\\
Montgomery endomorphism\\ of Edwards products}
\author{David Kohel}
\institute{
Institut de Math{\'e}matiques de Luminy\\
Universit{\'e} de la M{\'e}diterran{\'e}e\\
163, avenue de Luminy, Case 907\\
13288 Marseille Cedex 9\\ France}
\date{}
\begin{document}
\maketitle

\begin{abstract}
Let $E$ be an elliptic curve, $\cK_1$ its Kummer curve 
$E/\{\pm1\}$, $E^2$ its square product, and $\cK_2$ the 
split Kummer surface $E^2/\{\pm1\}$.  
The addition law on $E^2$ gives a large endomorphism 
ring, which induce endomorphisms of $\cK_2$. 
With a view to the practical applications to scalar 
multiplication on $\cK_1$, we study the explicit 
arithmetic of $\cK_2$.
\end{abstract}

\section{Introduction}
\label{section:introduction}

Let $A$ be an abelian group, whose group law is expressed additively.
Let $\Mat_2(\ZZ)$ be the subring of $\End(A^2)$, acting as 
$$
\alpha(x,y) = (ax+by, cx+dy) \mbox{ where } 
\alpha = \left(\begin{array}{cc}a&b\\c&d\end{array}\right)\!\cdot
$$ 
Define endomorphisms $\sigma$ and $\varphi_i$ by
$$
\sigma = \left(\begin{array}{r@{\;\;}r}0&1\\1&0\end{array}\right)\!, \ 
\varphi = \varphi_0 
         = \left(\begin{array}{r@{\;\;}r}2&0\\1&1\end{array}\right)\!, \mbox{ and }   
\varphi_1 = \sigma\varphi\sigma = 
  \left(\begin{array}{r@{\;\;}r}1&1\\0&2\end{array}\right)\!.
$$
The Montgomery ladder for scalar multiplication by an integer $n$ is 
expressed on $A^2$ by the recursion 
$$
v_r = (0,x) \mbox{ and } v_i = \varphi_{n_i}(v_{i+1}) \mbox{ for } i = r-1,\dots,1,0,
$$
where $n$ has binary representation $n_{r-1}\dots n_1n_0$.  
The successive steps $v_i$ in the ladder are of the form $(mx,(m+1)x)$ and
$v_0 = (nx,(n+1)x)$, from which we output $nx$ 
(see Montgomery~\cite{Montgomery} and Joye~\cite{JoyeYen} for general formulation).
We refer to $\varphi$ as the {\it Montgomery endomorphism}. 

Since $-1$ is an automorphism in the center of $\Mat_2(\ZZ)$, an endomorphism 
of $A^2$ also acts on the quotient $A^2/\{\pm 1\}$. 
In particular, we will derive expressions of the above operators on the split 
Kummer surface $\cK_2 = E^2/\{\pm 1\}$ associated to an elliptic curve $E$.  

Prior work has focused on Kummer curves $\cK_1 = E/\{\pm 1\} \isom \PP^1$, 
determined by the quotient $\pi: E \rightarrow \cK_1$, often expressed as 
operating only on the $x$-coordinate of a Weierstrass model 
(see Montgomery~\cite{Montgomery}, Brier and Joye~\cite{BrierJoye} and Izu 
and Takagi~\cite{IzuTakagi}). Such methods consider the full quotient 
$\cK_1^2 = E^2/\{(\pm 1,\pm 1)\}$. For this approach one takes the endomorphism 
$$
\rho = \left(\begin{array}{r@{\;\;}r}1&1\\1&-1\end{array}\right)\!,\   
$$ 
arising in duplication formulas for theta functions~\cite{Mumford}. 
This endomorphism satisfies $\rho^2 = 2$, giving a factorization 
of $2$ in $\End(E^2)$, and induces an endomorphism of $\cK_2$, which 
we also refer to as $\rho$.  This gives a commutative diagram:
$$
\SelectTips{cm}{10}
\xymatrix@R=6mm@C=12mm{
 E^2 \ar[d]\ar[r]^\rho\ar[rd] &  E^2 \ar[d] \\
\cK_2 \ar[d]\ar[r]^\rho\ar[rd] & \cK_2 \ar[d] \\
\cK_1^2                   & \cK_1^2.
}
$$
Although $\rho$ does not extend to an endomorphism of $\cK_1^2$ we obtain 
a system of polynomial equations in $\cK_1^2 \times \cK_1^2$ from the graph:
$$
\rGamma_\rho = \left\{ \big((P,Q),\rho(P,Q)\big) : (P,Q) \in E^2 \right\} 
              \subset E^2 \times E^2.
$$
One recovers $\pi(P+Q)$ from specializing this sytem at known points 
$\pi(P)$, $\pi(Q)$ and $\pi(P-Q)$.  By considering the partial quotient 
$\cK_2$ as a double cover of $\cK_1^2$, we obtain endmorphisms of $\cK_2$ 
induced by the isogenies $\rho$ as well as $\varphi_0$ and $\varphi_1$.

Since the structure of addition laws of abelian varieties, or isogenies 
in general, depends intrinsically on the embedding in projective space 
(see~\cite{Kohel}, \cite{LangeRuppert}), we develop specific models for 
the Kummer surface $\cK_2$ associated to a model of an elliptic curve $E$ 
with prescribed embedding.  For this purpose we investigate Edwards models 
for elliptic curves embedded in $\PP^3$. 

\section{Projective embeddings of a Kummer variety $\cK$}
\label{section:Kummer_embeddings}

Let $k$ be a field of characteristic different from $2$ and $A/k$ 
an abelian variety.  An addition law on $A$ is defined by Lange 
and Ruppert~\cite{LangeRuppert} to be a polynomial representative 
for the addition morphism $A^2 \rightarrow A$. Such maps depend  
in an essential way on its projective embedding. 
Similarly, the explicit polynomial maps for morphisms of the 
Kummer variety $\cK = A/\{\pm 1\}$ depend on a choice of its 
projective embedding.  We approach the problem of embedding $\cK$ 
in the following way.  

Let $i: A \rightarrow \PP^r$ be a projectively normal embedding 
(see~\cite{Kohel} for a definition and motivation for this hypothesis),
determined by a symmetric invertible sheaf 
$\cL = \cO_A(1) = i^*\cO_{\PP^r}(1)$ and let $\pi: A \rightarrow \cK$ 
be the projection morphism. 
We say that an embedding $j: \cK \rightarrow \PP^s$ is {\it compatible} 
with $i: A \rightarrow \PP^r$ if $\pi$ is represented by a linear 
polynomial map.  In terms of the invertible sheaf $\cL_1 = \cO_{\cK}(1) 
= j^*\cO_{\PP^s}(1)$, this condition is equivalent to: 
$$
\Hom(\pi^*\cL_1,\cL) \isom \rGamma(A,\pi^*\cL_1^{-1}\otimes\cL) \ne 0,
$$
where $\rGamma(A,\cM)$ is the space of global sections for a sheaf $\cM$.
If we have $\pi^*\cL_1 \isom \cL$ then $\Hom(\pi^*\cL_1,\cL) \isom k$,
and $\pi$ admits a unique linear polynomial map, up to scalar. 

Conversely we can construct an embedding of $\cK$ comptable with given 
$i: A \rightarrow \PP^r$ as follows. The condition that 
$i:A \rightarrow \PP^r$ is projectively normal is equivalent to an 
isomorphism of graded rings 
$$
k[X_0,X_1,\dots,X_r]/I_A = \bigoplus_{n=0}^\infty \rGamma(A,\cL^n),
$$
where $I_A$ is the defining ideal for $A$ in $\PP^r$. We fix an 
isomorphism $\cL \isom [-1]^*\cL$, from which we obtain an eigenspace 
decomposition of the spaces $\rGamma(A,\cL^n)$: 
$$
\rGamma(A,\cL^n) = \rGamma(A,\cL^n)^+ \oplus \rGamma(A,\cL^n)^-.
$$
The sign is noncanonical, but we may choose the sign for the isomorphism 
$\cL \isom [-1]^*\cL$ such that $\dim\rGamma(A,\cL)^+ \ge \dim\rGamma(A,\cL)^-$.  
Setting $V = \rGamma(A,\cL)^+$, we define $j:\cK \rightarrow \PP^s$ by the 
image of $A$ in $\PP^s = \PP(V)$.  
This defines the sheaf $\cL_1 = j^*\cO_{\PP^s}(1)$ and gives a homomorphism 
$\pi^*\cL_1 \rightarrow \cL$.  

In what follows we carry out this construction to determine projective 
embeddings for the Kummer varieties $\cK_1$ and $\cK_2$ associated to 
an elliptic curve embedded as an Edwards model in $\PP^3$, and study the 
form of the endomorphisms $\sigma$, $\varphi$ and $\rho$. 

\section{Edwards model and projective embeddings of $\cK_1$}
\label{section:K1_embedding}

Let $E$ be an elliptic curve embedded in $\PP^3$ as an Edwards model 
(see Edwards~\cite{Edwards}, Bernstein and Lange~\cite{BernsteinLange-Edwards},
and Hisil et al.~\cite{Hisil-EdwardsRevisited} or Kohel~\cite{Kohel} 
for this form):
$$
X_0^2 + d X_3^2 = X_1^2 + X_2^2,\quad X_0 X_3 = X_1 X_2,  
$$
with identity $\oO = (1:0:1:0)$, and negation map 
$$
[-1](X_0:X_1:X_2:X_3) = (X_0:-X_1:X_2:-X_3).
$$ 
The eigenspace decomposition for $\rGamma(E,\cL)$ is 
$$
\rGamma(E,\cL)
  = \bigoplus_{i=1}^4 kX_i 
    = (kX_0 \oplus kX_1) \oplus (kX_2 \oplus kX_3).
$$
The Kummer curve of $E$ is $\cK_1 \isom \PP^1$, with quotient map
$$
(X_0:X_1:X_2:X_3) \mapsto (X_0:X_2) = (X_1:X_3).
$$ 
We can now express the scalar multiplication by 2 on $\cK_1$ in 
terms of coordinate functions $X_0, X_1$ on $\cK_1$.   

\begin{lemma}
The duplication morphism $[2]:\cK_1 \rightarrow \cK_1$ is uniquely 
represented by the polynomial map
$$
(X_0:X_1) \mapsto ( (d-1)X_0^4 - d (X_0^2 - X_1^2)^2 : (X_0^2 - X_1^2)^2 + (d-1) X_1^4 ).
$$
\end{lemma}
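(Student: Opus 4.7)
The plan is to compute directly: start from the Edwards duplication on $E$, descend to $\cK_1 \cong \PP^1$, and eliminate the coordinate invisible on the quotient by using the defining equations. In the affine chart where $X_0 \ne 0$, I would set $x = X_1/X_0$ and $y = X_2/X_0$; the relation $X_0 X_3 = X_1 X_2$ forces $X_3/X_0 = xy$, and the remaining Edwards equation collapses to $x^2 + y^2 = 1 + dx^2y^2$. The identity $\oO = (1:0:1:0)$ sits at $(0, 1)$, negation acts by $(x, y) \mapsto (-x, y)$, and $\pi: E \to \cK_1$ is $(x, y) \mapsto y$. The classical Edwards duplication
$$
[2](x,y) = \left( \frac{2xy}{1 + dx^2y^2},\ \frac{y^2 - x^2}{1 - dx^2y^2} \right)
$$
thus descends to the map $y \mapsto (y^2 - x^2)/(1 - dx^2y^2)$ on $\cK_1$.

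To eliminate $x$, I would rewrite the curve equation as $x^2 = (1 - y^2)/(1 - dy^2)$, substitute into both numerator and denominator, and cancel the common factor $1 - dy^2$. This produces
$$
[2](y) = \frac{-1 + 2y^2 - dy^4}{1 - 2dy^2 + dy^4}.
$$
With $(X_0:X_1) = (1:y)$ the chosen coordinates on $\cK_1$, homogenizing in degree $4$ and comparing with the stated formula --- after direct expansion of $(d-1)X_0^4 - d(X_0^2-X_1^2)^2 = -X_0^4 + 2dX_0^2X_1^2 - dX_1^4$ and $(X_0^2-X_1^2)^2 + (d-1)X_1^4 = X_0^4 - 2X_0^2X_1^2 + dX_1^4$ --- shows agreement up to a global sign, which is projectively irrelevant.

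For uniqueness I would invoke the standard fact that a morphism $\PP^1 \to \PP^1$ of degree $n$ is represented by a pair of degree-$n$ polynomials without common factor, unique up to a common scalar. Here $\deg[2]_{\cK_1} = 4$ because $\deg[2]_E = 4$ and $\deg \pi = 2$. The no-common-factor check is quick: setting $z = y^2$ and writing $F(z) = dz^2 - 2dz + 1$ and $G(z) = dz^2 - 2z + 1$ for the two quartics in $z$, one has $F - G = 2(1-d)z$, so for $d \ne 1$ any common zero in $z$ must be $0$, impossible since $F(0) = G(0) = 1$. The only genuine work is the algebraic simplification in the middle step; there is no conceptual obstacle.
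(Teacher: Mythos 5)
Your proposal is correct and follows essentially the same route as the paper: the formula is obtained from the Edwards duplication law descended through the quotient $\pi:(x,y)\mapsto y$, and uniqueness comes from the degree count $\deg[2]_{\cK_1}=4$, your coprime-binary-forms argument being exactly the paper's identification $\Hom([2]^*\cL_1,\cL_1^4)\isom\rGamma(\PP^1,\cO_{\PP^1})\isom k$ specialized to $\PP^1$. The one point to tidy is that your common-factor check in the variable $z=y^2$ only excludes common zeros with $X_0\neq 0$; you should also observe that at $X_0=0$ the two quartics reduce to $-dX_1^4$ and $dX_1^4$, which cannot vanish simultaneously since $d\neq 0$ on a nonsingular Edwards curve.
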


\begin{proof}
The correctness of the polynomial map can be directly verified 
by the fact that the known endomorphisms $[2]$ on $E$ commutes 
with $\pi$ and the above polynomial map for $[2]$ on $\cK_1$. 
The uniqueness follows from the existence of the above degree 
four polynomial expressions, since from $\deg([2]) = 4$ we 
obtain $[2]^*\cL_1 \isom \cL_1^4$.
Since degree $n$ polynomial expressions for a morphism $\psi$ 
are in bijection with 
$$
\Hom(\psi^*\cL_1,\cL_1^n) \isom 
  \rGamma(E,\psi^*\cL_1^{-1}\otimes\cL_1^n),
$$
the result follows.
\qed
\end{proof}

\section{Segre embedings and projective products}
\label{section:Segre_embeddings}

In general a projective model behaves well with respect to 
the theory.  In order to characterize a product $X \times Y$ 
with $X \subseteq \PP^r$ and $Y \subseteq \PP^s$ we apply 
the Segre embedding $S: \PP^r \times \PP^s \rightarrow \PP^{rs+r+s}$
given by 
$$
\big( (X_0:X_1:\dots:X_r), (Y_0:Y_1:\dots:Y_s) \big)
\longmapsto (X_0Y_0:X_1Y_0:\dots:X_rY_s),
$$
and consider the image $S(X \times Y)$ in $\PP^{rs+r+s}$.

For $r = s = 1$, we have $(r+1) + (s+1) = 4$ coordinates 
to represent a point in $\PP^1 \times \PP^1$ and 
$(r+1)(s+1) = 4$ coordinates for a point in $\PP^3$. 
For higher degrees or powers 
$\PP^{r_1} \times \cdots \times \PP^{r_t}$ the Segre embedding 
becomes unwieldy for explicit computation.

In particular, for the product $\cK_1^2 \isom \PP^1 \times 
\PP^1$ this gives the embedding of $\cK_1^2$ in $\PP^3$ 
as the hypersurface $U_0U_3 = U_1U_2$, given by  
$$
\big( (X_0:X_1), (Y_0:Y_1) \big) \longmapsto 
(U_0:U_1:U_2:U_3) = (X_0Y_0:X_1Y_0:X_0Y_1:X_1Y_1).
$$
The inverse is given by the product of projections 
$\pi_1: S(\cK_1^2) \rightarrow \cK_1$
$$
(U_0:U_1:U_2:U_3) \longmapsto (U_0:U_1) = (U_2:U_3),
$$
and $\pi_2: S(\cK_1^2) \rightarrow \cK_1$
$$
(U_0:U_1:U_2:U_3) \longmapsto (U_0:U_2) = (U_1:U_3).
$$
Each projection is represented locally by a two-dimensional 
space of linear polynomial maps, but no such map defines 
$\pi_i$ globally as a morphism. 

We use the Segre embedding $\cK_1^2 \rightarrow S(\cK_1^2)$ 
to provide a projective embedding for $\cK_1^2$ and construct 
$\cK_2$ as a double cover of $S(\cK_1^2)$ in 
$S(\cK_1^2) \times \PP^1 \subseteq \PP^3 \times \PP^1$. 
To preserve the compactness of the representation we work with 
the model in $\PP^3 \times \PP^1$, rather than its model in 
$\PP^7$, however we give this model in Theorem~\ref{K2_models}.

In order to define a morphism $\cK_2 \rightarrow \cK_2$ it 
suffices to make use of the factorization through $\cK_1^2 
\times \PP^1$ to each of the products.  Thus a morphism 
$\psi: X \rightarrow \cK_2$ is determined by three maps 
$\psi_i = \pi_i\circ\psi$ for $1 \le i \le 3$, and a composition 
with a Segre embedding of $\cK_1^2$ to $\PP^3$ gives the 
map to $\cK_2$ in $\PP^3 \times \PP^1$.  We note, however,
that expansion of polynomial maps for this factorization 
$S \circ (\pi_1 \times \pi_2)$ may yield polynomial maps 
of higher degree than $\cK_2 \rightarrow S(\cK_1^2)$ 
directly (see Theorem~\ref{K2_models}).
\vspace{2mm}

\noindent{\bf Note.}
Despite the isomorphism $\cK_1 \isom \PP^1$, and even equality 
under the projective embedding, we write $\cK_1^2$ and $\cK_1^2 
\times \PP^1$ rather than $(\PP^1)^2$ and $(\PP^1)^3$ in order 
to reflect the distinguished role of the two Kummer curves 
in this product. 

\section{Edwards model and projective embeddings of $\cK_2$}
\label{section:K2_embeddings}

We now describe the embeddings of $\cK_2$ as a double cover of $\cK_1^2$. 

\begin{theorem}
\label{K2_models}
Let $E: X_0^2 + d X_3^2 = X_1^2 + X_2^2, X_0X_3 = X_1X_3$ be 
an elliptic curve in $\PP^3$ with identity $\oO = (1:0:1:0)$. 
The Kummer surface $\cK_2$ has a model as a hypersurface in 
$\cK_1^2 \times \PP^1$ given by 
$$
(X_0^2-X_1^2)(Y_0^2-Y_1^2)Z_0^2 = (X_0^2-dX_1^2)(Y_0^2-dY_1^2)Z_1^2,
$$
with base point $\pi(\oO) = ((1:1),(1:1),(1:0))$, and 
projection $E^2 \rightarrow \cK_2$ given by 
$\pi_1(P,Q) = (X_0:X_2)$, $\pi_2(P,Q) = (Y_0:Y_2)$, and 
$$
\pi_3(P,Q) = 
(X_0Y_0:X_1Y_1) = 
(X_2Y_0:X_3Y_1) = 
(X_0Y_2:X_1Y_3) = 
(X_2Y_2:X_3Y_3),
$$
where $(P,Q) = \big((X_0:X_1:X_2:X_3), (Y_0:Y_1:Y_2:Y_3)\big)$.

Under the Segre embedding $S: \cK_1^2 \mapsto \PP^3$, this 
determines the variety in $\PP^3 \times \PP^1$ cut out by 
$$
(U_0^2 - U_1^2 - U_2^2 + U_3^2) Z_0^2 
= (U_0^2 - d U_1^2 - d U_2^2 + d^2 U_3^2) Z_1^2, 
$$
on the hypersurface $U_0 U_3 = U_1 U_2$ defining $S(\cK_1^2)$.
The Segre embedding of $\cK_2$ in $\PP^7$ is cut 
out by the quadratic relation
$$
T_0^2 - T_1^2 - T_2^2 + T_3^2 = T_4^2 - d T_5^2 - d T_6^2 + d^2 T_7^2,
$$
on the image of the Segre embedding of $(\PP^1)^3 \rightarrow \PP^7$, 
determined by:
$$
\begin{array}{ccc}
T_0 T_3 = T_1 T_2,\ T_0 T_5 = T_1 T_4,\ T_0 T_6 = T_2 T_4,\\ 
T_0 T_7 = T_3 T_4,\ T_1 T_6 = T_3 T_4,\ T_1 T_7 = T_3 T_5,\\ 
T_2 T_5 = T_3 T_4,\ T_2 T_7 = T_3 T_6,\ T_4 T_7 = T_5 T_6.
\end{array}
$$
The morphism to $E^2 \rightarrow S(\cK_2) \subseteq \PP^7$ 
is determined by:
$$
(
  X_0 Y_0 : X_2 Y_0 : X_0 Y_2 : X_2 Y_2, 
  X_1 Y_1 : X_3 Y_1 : X_1 Y_3 : X_3 Y_3
).
$$
\end{theorem}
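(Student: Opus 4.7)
The plan is to apply the construction of Section~\ref{section:Kummer_embeddings} to the abelian surface $A = E^2$ with line bundle $\cL = p_1^*\cO_E(1) \otimes p_2^*\cO_E(1)$, and then to unwind the resulting $\PP^7$-model through the two Segre factorizations. Since $[-1]$ on $E$ fixes $X_0, X_2$ and negates $X_1, X_3$, the $+$ eigenspace of $\rGamma(E^2,\cL)$ is the eight-dimensional space spanned by the bilinear monomials of matching parity,
$$
X_0Y_0,\ X_2Y_0,\ X_0Y_2,\ X_2Y_2,\ X_1Y_1,\ X_3Y_1,\ X_1Y_3,\ X_3Y_3,
$$
which are precisely the coordinates $T_0,\dots,T_7$ of the embedding $\cK_2 \hookrightarrow \PP^7$ stated at the end of the theorem. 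The projections $\pi_1(P,Q) = (X_0:X_2)$ and $\pi_2(P,Q) = (Y_0:Y_2)$ are direct pullbacks from $E$ to its Kummer quotient $\cK_1$, while the four equivalent expressions for $\pi_3$ reduce by crossmultiplication to the Edwards relations $X_0X_3 = X_1X_2$ and $Y_0Y_3 = Y_1Y_2$.

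To identify the equations in $\PP^7$, I would set $U_0 = X_0Y_0$, $U_1 = X_2Y_0$, $U_2 = X_0Y_2$, $U_3 = X_2Y_2$ for the Segre image $S(\cK_1^2) \subseteq \PP^3$, and $(Z_0:Z_1) = (X_0Y_0:X_1Y_1)$ for the additional $\PP^1$-factor, then check the Segre compatibility $T_i \propto U_{i\bmod 4}\,Z_{\lfloor i/4\rfloor}$ on $E^2$ using $X_0X_3=X_1X_2$ and $Y_0Y_3=Y_1Y_2$. Six of the nine listed quadratic relations are then the $2\times 2$ minors of the Segre embedding $\PP^3 \times \PP^1 \to \PP^7$, and the remaining three encode $U_0U_3 = U_1U_2$ across the two $Z$-slices. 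The one genuinely new quadric
$$
T_0^2 - T_1^2 - T_2^2 + T_3^2 = T_4^2 - dT_5^2 - dT_6^2 + d^2T_7^2
$$
factors on $E^2$ as $(X_0^2 - X_2^2)(Y_0^2 - Y_2^2) = (X_1^2 - dX_3^2)(Y_1^2 - dY_3^2)$, an identity which follows immediately from $X_0^2 - X_2^2 = X_1^2 - dX_3^2$ (the Edwards equation rewritten) applied to each of the two factors.

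Descending to the smaller models is then formal: expressing the new quadric in $U,Z$ yields $(U_0^2 - U_1^2 - U_2^2 + U_3^2)Z_0^2 = (U_0^2 - dU_1^2 - dU_2^2 + d^2U_3^2)Z_1^2$ on $S(\cK_1^2) \times \PP^1 \subseteq \PP^3 \times \PP^1$, and undoing the $\cK_1 \times \cK_1 \to \PP^3$ Segre recovers the compact hypersurface $(X_0^2-X_1^2)(Y_0^2-Y_1^2)Z_0^2 = (X_0^2-dX_1^2)(Y_0^2-dY_1^2)Z_1^2$ in $\cK_1^2 \times \PP^1$; the base point $\pi(\oO) = ((1{:}1),(1{:}1),(1{:}0))$ is then verified by direct substitution. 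The main obstacle, which I would handle last, is showing that the equations cut out \emph{precisely} $\cK_2$ rather than merely containing its image: since $\cK_2 = E^2/\{\pm 1\}$ is an irreducible surface and the hypersurface in $\cK_1^2 \times \PP^1$ is irreducible of tridegree $(2,2,2)$, the containment of one irreducible surface in the other (of matching dimension) forces equality.
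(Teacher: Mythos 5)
Your proposal is correct and follows essentially the same route as the paper: verify the defining relation by pulling back to $E^2$ and applying the Edwards identities, identify the $\PP^7$ coordinates as the even eigenspace of the sections of the product sheaf on $E^2$, and conclude equality rather than mere containment from irreducibility of the double cover of $\cK_1^2$. The only difference is one of direction --- you build the $\PP^7$ model first and descend through the Segre factorizations, whereas the paper verifies the relation in $\cK_1^2 \times \PP^1$ first and traces it upward --- which is immaterial to the argument.
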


\begin{proof}
The quadratic relation for $\cK_2$ in $\cK_1^2 \times \PP^1$: 
$$
(X_0^2-X_1^2)(Y_0^2-Y_1^2)Z_0^2 = (X_0^2-dX_1^2)(Y_0^2-dY_1^2)Z_1^2,
$$
follows by pulling back the relation to $E^2$ by  
$$
\pi^*(Y_1/Y_0) = (Y_2/Y_0),\  
\pi^*(X_1/X_0) = (X_2/X_0),\  
\pi^*(Z_1/Z_0)^2 = (X_1Y_1/X_0Y_0)^2.
$$ 
Since the morphism maps through $E^2/\{\pm 1\}$, defines a 
double cover of $\cK_1^2$, and is irreducible, we conclude 
that the quadratic relation determines $\cK_2$. 
   The remaining models follow by tracing this quadratic 
relation through the Segre embeddings. 

The last model, in $\PP^7$, can be interpreted as coming from the 
construction of Section~\ref{section:Kummer_embeddings}, applied 
to the Segre embedding of $E^2$ in $\PP^{15}$. 
The sixteen-dimensional space of global sections splits into two 
eight-dimensional subspaces, for which 
$$
\{
  X_0 Y_0 : X_2 Y_0 : X_0 Y_2 : X_2 Y_2, 
  X_1 Y_1 : X_3 Y_1 : X_1 Y_3 : X_3 Y_3
\}
$$
forms a basis for the plus one eigenspace. The compatibility of 
the maps from $E^2$ is verified by projecting from the models 
in $\PP^7$ and $\PP^3 \times \PP^1$ to $\cK_1^2 \times \PP^1$.  
\qed
\end{proof}

The description of the maps in the previous theorem, together with the 
action of $[-1]$ on the Edwards model, implies the next corollary.

\begin{corollary}
\label{corollary:K2_automorphisms}
The automorphism $\sigma: E^2 \rightarrow E^2$ given by $(P,Q) \mapsto 
(Q,P)$ induces the automorphism of $\cK_2$ in the respective models 
in $\cK_1^2 \times \PP^1$, $\PP^3 \times \PP^1$ and $\PP^7$:
$$
\big((X_0:X_1),(Y_0:Y_1),(Z_0:Z_1)\big) \mapsto \big((Y_0:Y_1),(X_0:X_1),(Z_0:Z_1)\big),
$$
$$
\big((U_0:U_1:U_2:U_3),(Z_0:Z_1)\big) \mapsto \big((U_0:U_2:U_1:U_3),(Z_0:Z_1)\big),
$$
$$
(T_0:T_1:T_2:T_3:T_4:T_5:T_6:T_7) \mapsto (T_0:T_2:T_1:T_3:T_4:T_6:T_5:T_7).
$$
The automorphism $\iota: \cK_2 \rightarrow \cK_2$ induced by the 
automorphisms $[-1]\times[1]$ and $[1] \times [-1]$ of $E^2$ is given by:
$$
\big((X_0:X_1),(Y_0:Y_1),(Z_0:Z_1)\big) \mapsto \big((X_0:X_1),(Y_0:Y_1),(Z_0:-Z_1)\big),
$$
$$
\big((U_0:U_1:U_2:U_3),(Z_0:Z_1)\big) \mapsto \big((U_0:U_1:U_2:U_3),(Z_0:-Z_1)\big),
$$
$$
(T_0:T_1:T_2:T_3:T_4:T_5:T_6:T_7) \mapsto (T_0:T_1:T_2:T_3:-T_4:-T_5:-T_6:-T_7).
$$
\end{corollary}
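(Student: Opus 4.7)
The plan is to read off both automorphisms directly from the projection formulas of Theorem~\ref{K2_models}, using the explicit action of $\sigma$ on $E^2$ and of $[-1]$ on the Edwards coordinates. Since $E^2 \to \cK_2$ is surjective and each of the three models is pinned down by the projections $\pi_1,\pi_2,\pi_3$ (respectively by their Segre combinations), it suffices to compute on $E^2$ in the Edwards coordinates and push forward. In particular, the only content is to verify that each stated map agrees, coordinate by coordinate, with the parametrizations of Theorem~\ref{K2_models}.

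For $\sigma$, the swap $(P,Q) \mapsto (Q,P)$ interchanges $X_i$ with $Y_i$. From $\pi_1 = (X_0{:}X_2)$ and $\pi_2 = (Y_0{:}Y_2)$ we immediately obtain $\pi_1\circ\sigma = \pi_2$ and $\pi_2\circ\sigma = \pi_1$, yielding the swap of the first two factors; the representative $(X_0Y_0:X_1Y_1)$ for $\pi_3$ is manifestly symmetric in $(P,Q)$, so the $Z$-component is fixed. In the Segre model one substitutes $U_0 = X_0Y_0,\, U_1 = X_1Y_0,\, U_2 = X_0Y_1,\, U_3 = X_1Y_1$ (at the level of $\cK_1^2$) and observes $U_1 \leftrightarrow U_2$ while $U_0, U_3$ are fixed. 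For the $\PP^7$ model, using the basis $\{X_0Y_0,X_2Y_0,X_0Y_2,X_2Y_2,X_1Y_1,X_3Y_1,X_1Y_3,X_3Y_3\}$ from Theorem~\ref{K2_models}, the swap exchanges $T_1 \leftrightarrow T_2$ and $T_5 \leftrightarrow T_6$ and fixes the remaining four.

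For $\iota$, the first point is well-definedness: $[-1]\times[1]$ and $[1]\times[-1]$ differ by $[-1]\times[-1]$, which lies in the kernel $\{\pm 1\}$ of $E^2 \to \cK_2$, so both representatives descend to the same automorphism of $\cK_2$. The Edwards action $[-1](X_0{:}X_1{:}X_2{:}X_3) = (X_0{:}{-}X_1{:}X_2{:}{-}X_3)$ negates only $X_1$ and $X_3$. Hence $\pi_1 = (X_0{:}X_2)$ and $\pi_2 = (Y_0{:}Y_2)$ are fixed, while in the representative $(X_0Y_0:X_1Y_1)$ for $\pi_3$ the right coordinate picks up a sign under $[-1]\times[1]$, giving $(Z_0{:}Z_1)\mapsto(Z_0{:}{-}Z_1)$. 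This also determines the action on the $\PP^3\times\PP^1$ model, since the $U$-coordinates are pulled back from $\cK_1^2$ and thus invariant. In the $\PP^7$ model, inspection of the basis shows that $T_0,\dots,T_3$ are built from $\{X_0,X_2\}\times\{Y_0,Y_2\}$ and are unchanged, while each of $T_4,\dots,T_7$ has degree one in $\{X_1,X_3\}$ and hence changes sign.

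The only delicate point is the well-definedness of $\iota$, handled above by the observation that $[-1]\times[-1]$ is the nontrivial element of the kernel $\{\pm 1\}$; everything else reduces to monomial substitutions in the explicit parametrizations of Theorem~\ref{K2_models}, which I would record in a short table rather than expand in prose.
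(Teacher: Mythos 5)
Your proposal is correct and follows exactly the route the paper takes: the paper dispenses with the proof in one sentence (``The description of the maps in the previous theorem, together with the action of $[-1]$ on the Edwards model, implies the next corollary.''), and your argument is just that sentence carried out in detail --- substituting the swap and the negation into the explicit parametrizations of Theorem~\ref{K2_models}. Your added remark on the well-definedness of $\iota$ (that $[-1]\times[1]$ and $[1]\times[-1]$ differ by $[-1]\times[-1]$, which is killed in the quotient) is a worthwhile point the paper leaves implicit.
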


\section{Endomorphisms of Kummer surfaces $\cK_2$}
\label{section:K2_endomorphisms}

We are now able to define polynomial maps for the Montgomery endomorphism 
$\varphi$, where $\rho$, $\tau$, and $\varphi$ are the endomorphisms 
$$
\varphi = \left(\begin{array}{r@{\;\;}r}2&0\\1&1\end{array}\right) \mbox{ and }
\rho = \left(\begin{array}{r@{\;\;}r}1&1\\1&-1\end{array}\right) \mbox{ and }
\tau = \left(\begin{array}{r@{\;\;}r}1&1\\0&1\end{array}\right)\!,
$$
as elements of $\Mat_2(\ZZ)/\{\pm 1\}$. In addition we recall the definitions 
$$
\iota = \left(\begin{array}{r@{\;\;}r}-1&0\\0&1\end{array}\right) = 
        \left(\begin{array}{r@{\;\;}r}1&0\\0&-1\end{array}\right) \mbox{ and }
\sigma = \left(\begin{array}{r@{\;\;}r}0&1\\1&0\end{array}\right)\!,
$$
and note the commuting relations $\rho \circ \iota = \sigma \circ \rho$ and 
$\rho \circ \sigma = \iota \circ \rho$ for $\iota$, $\sigma$, and $\rho$.

Explicit polynomial maps for the Montgomery endomorphism $\varphi$ 
on $\cK_2$ follow from the identities
$$
\varphi_0 = \varphi = \tau \circ \sigma \circ \rho \mbox{ and } 
\varphi_1 = \sigma \circ \varphi \circ \sigma.
$$ 
As a consequence the Montgomery ladder can be expressed in terms of the 
automorphisms $\sigma$, $\iota$, and endomorphisms $\rho$ and $\tau$. 
The following two theorems, whose proof follows from standard addition 
laws on the Edwards model (see Bernstein and 
Lange~\cite{BernsteinLange-Edwards},~\cite{BernsteinLange-EdwardsComplete}, 
Hisil~\cite{Hisil-EdwardsRevisited}, and Kohel~\cite{Kohel}), and 
verification of the commutativity relations $\pi \circ \psi = \psi \circ \pi$
for an endomorphism $\psi$.  

\begin{theorem}
\label{theorem:K2_rho}
The projections of the endomorphisms $\rho: \cK_2 \rightarrow \cK_2$ 
are uniquely represented by polynomials of bidegree $(1,1)$, $(1,1)$,
and $(2,0)$, explicitly:
$$
\begin{array}{l}
\pi_1\circ\rho\big((U_0:U_1:U_2:U_3),(Z_0:Z_1)\big) =
( U_0 Z_0 - d U_3 Z_1 : -U_0 Z_1 + U_3 Z_0 )\\
\pi_2\circ\rho\big((U_0:U_1:U_2:U_3),(Z_0:Z_1)\big) =
(U_0 Z_0 + d U_3 Z_1 : U_0 Z_1 + U_3 Z_0),\\ 
\pi_3\circ\rho\big((U_0:U_1:U_2:U_3),(Z_0:Z_1)\big) =
(U_0^2 - d U_3^2 \,:\, -U_1^2 + U_2^2).
\end{array}
$$
The projection $\rho:\cK_2\rightarrow S(\cK_1^2)$ admits a two-dimensional 
space of polynomial maps of bidegree $(2,1)$ spanned by:
$$
\begin{array}{l}
\begin{array}{rl}
\big( 
  & U_0^2 - d U_1^2 - d U_2^2 + d U_3^2) Z_0 \,:\, \\
  & -(d-1) U_0 U_3 Z_0 - (U_0^2 - d U_1^2 - d U_2^2 + d^2 U_3^2) Z_1 \,:\, \\
  & -(d-1) U_0 U_3 Z_0 + (U_0^2 - d U_1^2 - d U_2^2 + d^2 U_3^2) Z_1 \,:\, \\
  & -(U_0^2 - U_1^2 - U_2^2 + d U_3^2) Z_0\ \big)
\end{array}\\
\begin{array}{rl}
\big( 
 & (U_0^2 - d U_1^2 - d U_2^2 + d U_3^2) Z_1 \,:\, \\
 & -(d - 1) U_0 U_3 Z_1 - (U_0^2 - U_1^2 - U_2^2 + U_3^2) Z_0 \,:\, \\
 & -(d - 1) U_0 U_3 Z_1 + (U_0^2 - U_1^2 - U_2^2 + U_3^2) Z_0 \,:\, \\
 & -(U_0^2 - U_1^2 - U_2^2 + d U_3^2) Z_1 \big).
\end{array}
\end{array}
$$
\end{theorem}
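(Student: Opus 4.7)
The plan is to derive each of the three component maps directly from the Bernstein--Lange addition laws on the Edwards model, then to justify the bidegree and uniqueness claims by a section-counting argument parallel to the Lemma of Section~\ref{section:K1_embedding}. Writing $P = (X_0:X_1:X_2:X_3)$ and $Q = (Y_0:Y_1:Y_2:Y_3)$, every coordinate of $P \pm Q$ is bilinear in $X$ and $Y$. Applying $\pi_1$ to $\rho(P,Q) = (P+Q, P-Q)$ yields $((P+Q)_0 : (P+Q)_2)$, which I would rewrite in terms of the Segre coordinates $U_0 = X_0Y_0$, $U_1 = X_2Y_0$, $U_2 = X_0Y_2$, $U_3 = X_2Y_2$ together with $(Z_0:Z_1) = (X_0Y_0 : X_1Y_1)$ from Theorem~\ref{K2_models}. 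The Edwards relations $X_0^2 + dX_3^2 = X_1^2 + X_2^2$, $X_0X_3 = X_1X_2$ (and their $Y$-analogues) are what collapse the biquadratic in $X, Y$ to the claimed bidegree-$(1,1)$ polynomial in $U$ and $Z$; the computation for $\pi_2 \circ \rho$ is identical with $P-Q$ in place of $P+Q$, and the sign flips in the $d$-coefficients reflect $Y_1 \mapsto -Y_1$, $Y_3 \mapsto -Y_3$ for negation on the Edwards model.

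For $\pi_3 \circ \rho$, the commutation $\rho \circ \iota = \sigma \circ \rho$ combined with $\pi_3 \circ \sigma = \pi_3$ from Corollary~\ref{corollary:K2_automorphisms} forces $\pi_3 \circ \rho$ to be $\iota$-invariant, hence to depend only on the $U_i$, giving the bidegree $(2,0)$. The explicit form is then obtained by expanding $(P+Q)_0(P-Q)_0$ and $(P+Q)_1(P-Q)_1$, simplifying with the Edwards relations, and matching the outputs to $U_0^2 - dU_3^2$ and $-U_1^2 + U_2^2$. Correctness in all three cases is finished by checking $\pi_i \circ \rho \circ \pi = (\pi_i \circ \rho) \circ \pi$ on $E^2$, which is a mechanical verification.

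Uniqueness of the bidegree-$(1,1)$ and $(2,0)$ representatives follows the template of the Lemma of Section~\ref{section:K1_embedding}: polynomial maps of a given bidegree from $\cK_2$ to $\cK_1$ correspond to global sections of $(\pi_i\circ\rho)^*\cO_{\cK_1}(1)^{-1}$ twisted by the appropriate bidegree sheaf on $\cK_2$, and one verifies this space is one-dimensional. The analogous count applied to $\cO_{S(\cK_1^2)}(1) \isom \cO_{\cK_1^2}(1,1)$ gives the claimed two-dimensional space of bidegree-$(2,1)$ maps $\cK_2 \to S(\cK_1^2)$; the two $4$-tuples listed in the theorem are then shown to be linearly independent modulo the defining quadratic of $\cK_2$ and so form a basis, the existence of two representatives reflecting the freedom to trade a factor of $Z_0^2$ for $Z_1^2$ (times a polynomial in the $U_i$) via the defining relation. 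The main obstacle will be the sheaf-theoretic bookkeeping on the double cover $\cK_2 \to \cK_1^2$, namely identifying the correct invertible sheaf in each case and computing its space of global sections, for example by pushing forward along the cover and using that the branch divisor is cut out by the defining quadratic in $\cK_1^2 \times \PP^1$.
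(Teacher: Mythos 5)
Your proposal is correct and follows essentially the same route as the paper, whose entire proof is the one-line remark that the theorem ``follows from standard addition laws on the Edwards model \dots and verification of the commutativity relations $\pi\circ\psi = \psi\circ\pi$'' --- exactly your computation of $\rho(P,Q)=(P+Q,P-Q)$ via the Bernstein--Lange formulas, reduction to the $U_i, Z_j$ coordinates using the Edwards relations, and the commutativity check on $E^2$. Your additional material (the $\iota$-invariance argument for $\pi_3\circ\rho$ and the section-counting for uniqueness and for the two-dimensionality of the bidegree-$(2,1)$ space) goes beyond what the paper writes down, and is a reasonable way to fill that gap.
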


\begin{theorem}
\label{theorem:K2_tau}
The maps $\pi_i\circ\tau: \cK_2 \rightarrow \cK_1$ are given by 
$$
\begin{array}{l}
\pi_1\circ\tau\big((U_0:U_1:U_2:U_3),(Z_0:Z_1)\big) 
  = (U_0 Z_0 - d U_3 Z_1 : -U_0 Z_1 + U_3 Z_0),\\
\pi_2\circ\tau\big((U_0:U_1:U_2:U_3),(Z_0:Z_1)\big) 
  = (U_0 : U_2) = (U_1 : U_3).
\end{array}
$$
and $\pi_3\circ\tau\big((U_0:U_1:U_2:U_3),(Z_0:Z_1)\big)$ is given 
by the equivalent expressions 
$$
\begin{array}{l}
\big( (U_0^2 - d U_3^2) Z_0 \,:\, (U_0 U_1 - U_2 U_3) Z_0 + (U_0 U_2 - d U_1 U_3) Z_1\big)\\
\big( -(U_0 U_2 - U_1 U_3) Z_0 + (U_0 U_1 - d U_2 U_3) Z_1 \,:\, (U_1^2 - U_2^2) Z_1\big)
\end{array}
$$
\ignore{
The composition $\cK_2 \rightarrow S(\cK_1^2) \subseteq \PP^3$ can be 
expressed in lower degree polynomials directly:
$$
\begin{array}{l}
\big( U_0 (U_0 Z_0 - d U_3 Z_1) \,:\, 
      U_0 (U_3 Z_0 - U_0 Z_1) \,:\, 
      U_2 (U_0 Z_0 - d U_3 Z_1) \,:\, 
      U_2 (U_3 Z_0 - U_0 Z_1) \big)\\
= 
\big( U_1 (U_0 Z_0 - d U_3 Z_1) \,:\, 
      U_1 (U_3 Z_0 - U_0 Z_1) \,:\, 
      U_3 (U_0 Z_0 - d U_3 Z_1) \,:\, 
      U_3 (U_3 Z_0 - U_0 Z_1) \big).
\end{array}
$$
}
\end{theorem}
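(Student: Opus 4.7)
The plan is to verify each of the three component maps $\pi_i \circ \tau$ separately, exploiting the fact that $\tau$ acts on $E^2$ by $(P,Q) \mapsto (P+Q,Q)$, which makes two of the three components essentially immediate.

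For $\pi_2 \circ \tau$: since $\tau$ fixes the second factor, we have $\pi_2 \circ \tau = \pi_2$, and the formula $(U_0:U_2) = (U_1:U_3)$ follows directly from Theorem~\ref{K2_models} together with the Segre relation $U_0 U_3 = U_1 U_2$. For $\pi_1 \circ \tau$: I observe that $\rho(P,Q) = (P+Q, P-Q)$ also has first component $P+Q$, so $\pi_1 \circ \tau$ and $\pi_1 \circ \rho$ agree as morphisms on $\cK_2$. The stated formula is precisely the one already established in Theorem~\ref{theorem:K2_rho}, and no new calculation is needed.

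The real work lies in $\pi_3 \circ \tau$. Writing $P+Q = (X'_0:X'_1:X'_2:X'_3)$ in Edwards coordinates, Theorem~\ref{K2_models} gives $\pi_3(P+Q,Q) = (X'_0 Y_0 : X'_1 Y_1)$. I would substitute a standard Edwards addition law (as in Bernstein--Lange or Hisil et al.) for $X'_0$ and $X'_1$, obtaining polynomials in the Edwards coordinates $(X_i, Y_j)$ of $P$ and $Q$, and then rewrite the resulting products in the symmetric Kummer coordinates $U_0 = X_0 Y_0$, $U_1 = X_2 Y_0$, $U_2 = X_0 Y_2$, $U_3 = X_2 Y_2$ together with $(Z_0:Z_1) = (X_0 Y_0 : X_1 Y_1)$.

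The principal obstacle is this compression: the naive Edwards addition output has higher bidegree in $(X,Y)$ than the target expression of bidegree $(2,1)$ in $(U,Z)$. The reduction uses the Edwards relations $X_0^2 + d X_3^2 = X_1^2 + X_2^2$ and $X_0 X_3 = X_1 X_2$ on each factor, together with the four equivalent representations of $(Z_0:Z_1)$ given in Theorem~\ref{K2_models}, which allow substitutions that regroup non-symmetric monomials $X_j Y_k$ into the symmetric quantities $U_i$. The presence of two distinct polynomial expressions in the statement reflects alternate choices among these representations; their equivalence on $\cK_2$ follows from the defining relation $(U_0^2 - U_1^2 - U_2^2 + U_3^2) Z_0^2 = (U_0^2 - d U_1^2 - d U_2^2 + d^2 U_3^2) Z_1^2$. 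Finally, the commutativity $\pi \circ \tau = \tau \circ \pi$ is verified by checking that the three component maps, reassembled through the appropriate Segre embedding, land on $\cK_2$ and recover the addition $(P,Q) \mapsto (P+Q, Q)$ on the level of quotients.
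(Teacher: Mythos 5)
Your proposal is correct and follows essentially the same route as the paper, whose entire proof of this theorem (and of Theorem~\ref{theorem:K2_rho}) is the one-line remark that the formulas follow from the standard addition laws on the Edwards model together with verification of the commutativity relation $\pi\circ\psi = \psi\circ\pi$. Your shortcuts --- that $\pi_2\circ\tau=\pi_2$ trivially, and that $\pi_1\circ\tau=\pi_1\circ\rho$ so the bidegree $(1,1)$ formula can be imported from Theorem~\ref{theorem:K2_rho} --- are valid and consistent with the paper, which indeed states identical polynomials for those components, while your treatment of $\pi_3\circ\tau$ (Edwards addition law, degree compression via the curve and Segre relations, equivalence of the two expressions modulo the defining relation of $\cK_2$) is exactly the computation the paper leaves implicit.
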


\section{Conclusion}

The above polynomial maps for Montgomery endomorphism $\varphi$ of $\cK_2$ 
allows one to carry out a simultaneous symmetric addition and doubling on 
the Kummer surface.  Besides the potential efficiency of this computation, 
this provides a simple geometric description of the basic ingredient for 
the Montgomery ladder on an Edwards model of an elliptic curve. 
The symmetry of the derived model for the split Kummer surface, and the 
endomorphisms $\iota$, $\sigma$, and $\rho$ provide the tools necessary 
for scalar multiplication on Edwards curves in cryptographic applications 
requiring protection from side channel attacks.


\begin{thebibliography}{99}

\bibitem{BernsteinLange-Edwards}
D.~J.~Bernstein, T.~Lange. 
Faster addition and doubling on elliptic curves.
{\it Advances in Cryptology: ASIACRYPT 2007}, 
Lecture Notes in Computer Science, {\bf 4833}, Springer, 29--50, 2007. 

\bibitem{BernsteinLange-EdwardsComplete}
D.~J.~Bernstein and T.~Lange.
A complete set of addition laws for incomplete Edwards curves, preprint, 
\url{http://eprint.iacr.org/2009/580}, 2009. 

\bibitem{BrierJoye}
E.~Brier and M.~Joye, 
Weierstrass elliptic curves and side-channel attacks,
{\it Public Key Cryptography}, 
{\it Lecture Notes in Comput.~Sci.}, {\bf 2274}, 335--345, 2002.

\bibitem{Edwards}
H.~Edwards.
A normal form for elliptic curves.
Bulletin of the American Mathematical Society, {\bf 44}, 393--422, 2007.

\bibitem{Hisil-EdwardsRevisited}
H.~Hisil, K.~K.-H.~Wong, G.~Carter, E.~Dawson, 
Twisted Edwards curves revisited, 
{\it Advances in cryptology -- ASIACRYPT 2008},
Lecture Notes in Computer Science, {\bf 5350}, Springer, Berlin, 326--343, 2008. 

\bibitem{Kohel}
D.~Kohel.
Addition law structure of elliptic curves.
to appear in {\it Journal of Number Theory}, 
\url{http://arxiv.org/abs/1005.3623}, 2011.

\bibitem{IzuTakagi}
A Fast Parallel Elliptic Curve Multiplication Resistant against Side Channel Attacks,
{\it Public Key Cryptography}, 
{\it Lecture Notes in Comput.~Sci.}, {\bf 2274}, 280--296, 2002.


\bibitem{JoyeYen}
M.~Joye and S.-M.~Yen.
The Montgomery Powering Ladder, 
CHES 2002, {\it Lecture Notes Comp. Sci.}, {\bf 2523}, 291--302, 2003.

\bibitem{LangeRuppert}
H.~Lange and W.~Ruppert.
Complete systems of addition laws on abelian varieties.
{\it Invent. Math.}, {\bf 79} (3), 603--610, 1985.


\bibitem{Montgomery}
P.~Montgomery.
Speeding the {P}ollard and elliptic curve methods of factorization,
{\it Math.~Comp.}, {\bf 48}, no. 177, 243--264, 1987.

\bibitem{Mumford}
D.~Mumford. 
On the equations defining abelian varieties I, 
{\it Invent.~Math.}, {\bf 1}, 287--354, 2966.

\end{thebibliography}
\end{document}